\theoremstyle{plain}
\newtheorem{thm}{Theorem}
\newtheorem{prop}{Proposition}
\newtheorem{rem}{Remark}
\theoremstyle{definition}
\newtheorem{defini}{Definition}
\newtheorem{defin}[defini]{Definition}
\renewcommand\refname{\large \textbf{References}}
\begin{document}
\begin{center}
{\huge A Bezout ring of stable range 2 which has square stable  range 1}
\end{center}
\vskip 0.1cm \centerline{{\Large Bohdan Zabavsky, \; Oleh Romaniv}}

\vskip 0.3cm

\centerline{\footnotesize{Department of Mechanics and Mathematics}}
\centerline{\footnotesize{Ivan Franko National University of Lviv,  Ukraine}}

\centerline{\footnotesize{zabavskii@gmail.com,\quad oromaniv@franko.lviv.ua}}
\vskip 0.5cm

\centerline{\footnotesize{November, 2018}}
\vskip 0.7cm

\footnotesize{\noindent\textbf{Abstract:} \textit{In this paper we introduced the concept  of a ring of stable range 2 which has square stable  range 1. We proved that a Hermitian ring $R$ which has (right) square stable range 1 is an elementary divisor ring if and only if $R$ is a duo ring of neat range 1. And we proved that a commutative Hermitian ring  $R$ is a Toeplitz ring if and only if $R$ is a ring of (right) square range 1. We proved that if $R$  be a commutative elementary divisor ring of (right) square stable range 1, then for any matrix $A\in M_2(R)$  one can find invertible Toeplitz matrices $P$ and $Q$ such that $
 PAQ=\left(\begin{smallmatrix}
 e_1&0\\
 0&e_2
 \end{smallmatrix}\right),
 $
 where $e_i$ is a divisor of~$e_2$.} }


\vspace{1ex}
\footnotesize{\noindent\textbf{Key words and phrases:} \textit{Hermitian ring, elementary divisor ring, stable range 1, stable range 2, square stable range 1, Toeplitz matrix, duo ring,  quasi-duo ring.}

}

\vspace{1ex}
\noindent{\textbf{Mathematics Subject Classification}}: 06F20,13F99.

\hskip 1,5 true cm

\normalsize

\section{Introduction}
\label{g1220}

    The notion of a stable range of a ring was introduced by H.~Bass, and became especially popular because of its various applications to the problem of cancellation and substitution of modules. Let us say that a module $A$ satisfies the power-cancellation  property if for all modules $B$ and $C$, $A\oplus B\cong A\oplus C$ implies that $B^n\cong C^n$ for some positive integer $n$ (here $B^n$ denotes the direct sum of $n$ copies of $B$). Let us say that a right $R$-module $A$ has the power-substitution property if given any right $R$-module decomposition $M=A_1\oplus B_1=A_2\oplus B_2$ which each $A_i\cong A$, there exist a positive integer $n$ and a submodule $C\subseteq M^n$ such that $M^n=C\oplus B_1^n=C\oplus B_2^n$.

    Prof. K.~Goodearl pointed out that a commutative rind $R$ has the power-substitution property if and only if $R$ is of (right) power stable range 1, i.e. if $aR+bR=R$ than $(a^n+bx)R=R$ for some $x\in R$ and some integer $n\ge 2$ depending on $a,b\in R$ \cite{Goodearl76}.

    Recall that a ring $R$ is said to have 1 in the stable range provided that whenever $ax+b = 1$ in $R$, there exists $y\in R$ such that $a+by$ is a unit in $R$. The following Warfield's  theorem shows that 1 in the stable range is equivalent to a substitution property.

\begin{thm} \cite{Goodearl76}
    Let $A$ be a right $R$-module, and set $E=\mathrm{End}_R(A)$. Then $E$ has 1 in the stable range if and only if for any right $R$-module decomposition $M=A_1\oplus B_1=A_2\oplus B_2$ with each $A_i\cong A$, there exists a submodule $C\subseteq M$ such that $M=C\oplus B_1=C\oplus B_2$.
\end{thm}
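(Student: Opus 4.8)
The plan is to prove both implications of this biconditional (Warfield's theorem) by passing back and forth between module decompositions of $M$ and a single equation in $E=\mathrm{End}_R(A)$. For the implication ``stable range $1$ yields a common complement'', I would begin with the data $M=A_1\oplus B_1=A_2\oplus B_2$, fix isomorphisms $\theta_i\colon A\to A_i$, and write $p_1,p_2$ for the projections of $M$ onto $A_1,A_2$ with kernels $B_1,B_2$. The key opening step is to expand the identity on $A_1$ through the second decomposition: for $a\in A_1$ one has $a=p_2(a)+(1-p_2)(a)$, and applying $p_1$ gives $1_{A_1}=(p_1|_{A_2})(p_2|_{A_1})+(p_1|_{B_2})((1-p_2)|_{A_1})$. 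Conjugating by the $\theta_i$ turns this into an identity $\mathbf{v}\mathbf{u}+\mathbf{w}=1$ in $E$. I then apply the hypothesis in the stated form $ax+b=1\Rightarrow a+by\in U(E)$ with $a=\mathbf v$, $x=\mathbf u$, $b=\mathbf w$, obtaining $y\in E$ with $\mathbf v+\mathbf w y$ a unit. The final step reads this unit back geometrically: I set $\psi'=(1-p_2)|_{A_1}\,\theta_1\,y\,\theta_2^{-1}\colon A_2\to B_2$ and $C=\{m+\psi'(m):m\in A_2\}$. Since $\psi'$ is valued in $B_2=\ker p_2$, the map $p_2|_C$ is an isomorphism, so $M=C\oplus B_2$; and transporting $p_1|_C$ back to $E$ yields exactly $\mathbf v+\mathbf w y$, a unit, whence $p_1|_C$ is an isomorphism and $M=C\oplus B_1$. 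Thus $C$ is the common complement.

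For the converse I would realize an arbitrary relation $ax+b=1$ in $E$ inside $M=A\oplus A$. Take $A_2=A\oplus 0$, $B_2=0\oplus A$, and use that $(a,b)\colon A\oplus A\to A$ is split surjective with section $\binom{x}{1}$ (precisely because $ax+b=1$); then set $A_1=\mathrm{im}\binom{x}{1}=\{(xn,n):n\in A\}\cong A$ and $B_1=\ker(a,b)$, so that $M=A_1\oplus B_1=A_2\oplus B_2$ with both $A_i\cong A$. The substitution property supplies $C$ with $M=C\oplus B_1=C\oplus B_2$. Being a complement to $B_2=0\oplus A$, $C$ is the graph $C=\{(n,cn):n\in A\}$ of some $c\in E$; being a complement to $B_1=\ker(a,b)$ forces $(a,b)|_C$ to be an isomorphism, and evaluated on $(n,cn)$ this is multiplication by $a+bc$. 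Hence $a+bc\in U(E)$, so $y=c$ is the element required for stable range $1$.

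The main obstacle, and the part deserving the most care, is the faithful bookkeeping of this dictionary between the two pictures: keeping the roles of $\theta_1,\theta_2$ straight so that the single identity extracted on $A_1$ really has the shape to which the one-sided stable range hypothesis applies, and, in the forward direction, checking that the \emph{same} unit $\mathbf v+\mathbf w y$ simultaneously certifies $C$ as a complement to \emph{both} $B_1$ and $B_2$ rather than to only one of them. Once the correspondence is set up correctly, the module-theoretic verifications (that $p_2|_C$ and $p_1|_C$ are isomorphisms, coming from the splitting of sections of surjections) are routine, as is the analogous reading of $C$ as a graph in the converse.
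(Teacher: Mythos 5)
The paper does not prove this theorem at all: it is quoted as background, with attribution to \cite{Goodearl76} (it is Warfield's substitution theorem), so there is no internal proof to compare against. Your blind reconstruction is correct, and it is essentially the standard published argument. In the forward direction, the identity $1_{A_1}=(p_1|_{A_2})(p_2|_{A_1})+(p_1|_{B_2})((1-p_2)|_{A_1})$ is exactly right, conjugation by $\theta_1,\theta_2$ does give $\mathbf{v}\mathbf{u}+\mathbf{w}=1$ in $E$ in the precise shape $ax+b=1$ demanded by the paper's definition of stable range $1$, and your graph $C=\{m+\psi'(m):m\in A_2\}$ works: since $\psi'$ lands in $B_2=\ker p_2$, one gets $M=C\oplus B_2$ directly, while $p_1|_C$ factors as $\theta_1(\mathbf{v}+\mathbf{w}y)\theta_2^{-1}\circ(p_2|_C)$, an isomorphism onto $A_1=M/B_1$-complement, giving $M=C\oplus B_1$; this is precisely the point you flagged as needing care, and your bookkeeping resolves it correctly. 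The converse, realizing $ax+b=1$ via the split surjection $(a,b)\colon A\oplus A\to A$ with section $\binom{x}{1}$, taking $A_1=\mathrm{im}\binom{x}{1}$, $B_1=\ker(a,b)$, and reading the common complement $C$ as a graph $\{(n,cn)\}$ on which $(a,b)$ acts as $a+bc$, is likewise the standard and correct argument; the only implicit step worth stating is that a complement of $B_2=0\oplus A$ is necessarily such a graph because the first-coordinate projection restricted to $C$ is an isomorphism. In short: the proof is sound and complete in outline; it simply supplies what the paper deliberately leaves to the literature.
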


    A ring $R$ is said to have 2 in the stable range if  for any $a_1,\dots, a_r\in R$ where $r\ge 3$ such that $a_1R+\dots+a_rR=R$, there exist elements $b_1,\dots, b_{r-1}\in R$ such that $(a_1+a_rb_1)R+(a_2+a_rb_2)R+\dots+(a_{r-1}+a_rb_{r-1})R=R$.

    K.~Goodearl pointed out to us the following result.

\begin{prop} \cite{Goodearl76}
    Let $R$ be a commutative ring which has 2 in the stable range. If $R$ satisfies right power-substitution, then so does $M_n(R)$, for all $n$.
\end{prop}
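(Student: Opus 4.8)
The plan is to reduce the assertion about the matrix ring to an assertion about a finitely generated free module over $R$, and then to drive the rank down to $1$ using the two hypotheses. First I would exploit the fact that the power-substitution property is \emph{categorical}: it is phrased entirely in terms of direct-sum decompositions, isomorphisms, and the passage $M\mapsto M^{m}$, all of which are preserved by an equivalence of module categories. Under the standard Morita equivalence between $\mathrm{Mod}\text{-}M_n(R)$ and $\mathrm{Mod}\text{-}R$, realized by $N\mapsto Ne_{11}$, the right module $M_n(R)$ corresponds to the right $R$-module $R^n$, since $M_n(R)e_{11}\cong R^n$. Hence ``$M_n(R)$ satisfies right power-substitution'' (as a module over itself) is equivalent to ``the $R$-module $R^n$ has the power-substitution property,'' and it suffices to prove the latter. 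Equivalently, since $\mathrm{End}_R(R^n)=M_n(R)$, this is the power-analogue of Warfield's Theorem~1 applied to $A=R^n$.

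Next I would establish that $R^n$ has power-substitution by induction on $n$. The base case $n=1$ is precisely the hypothesis that $R$ satisfies right power-substitution. For the inductive step I take a decomposition $M=A_1\oplus B_1=A_2\oplus B_2$ with each $A_i\cong R^n$, write $R^n=R^{n-1}\oplus R$, and split off a free rank-$1$ summand $L_i\cong R$ from each $A_i$. Following the usual substitution set-up, I would record the relative position of the two decompositions through the map $\pi\circ\iota\colon A_2\to A_1$ (projection along $B_1$ composed with inclusion) together with its complementary datum; because $R$ is commutative, the obstruction to aligning the rank-$1$ pieces of the two decompositions is captured by a unimodular row over $R$.

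Here the hypothesis that $R$ has $2$ in the stable range does the essential work: by the column operations furnished by its definition, the unimodular row can be shortened to length $2$, and this shortening terminates at length $2$ \emph{uniformly}, independent of the rank. Once the datum has the form of a unimodular pair $(a,b)$ with $aR+bR=R$, the power-substitution of $R$ (equivalently, its right power stable range $1$: $a^{m}+bx$ is a unit for suitable $x\in R$ and some $m\ge 2$) produces the required unit after replacing $M$ by $M^{m}$. This yields a submodule $C\subseteq M^{m}$ that complements the rank-$1$ pieces of both $B_1^{m}$ and $B_2^{m}$; applying the inductive hypothesis to the residual rank-$(n-1)$ free summands then assembles the full common complement in a suitable power of $M$, completing the step and hence the induction.

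The main obstacle I anticipate is the bookkeeping of the integer $m$. The rank-$1$ reduction and the inductive rank-$(n-1)$ reduction each introduce their own power, so I must pass to a common power $M^{m}$ in which a single submodule $C$ works simultaneously against $B_1$ and $B_2$. The second delicate point is confirming that a \emph{fixed} bound---$2$ in the stable range, rather than a rank-dependent one---genuinely suffices for every $n$; this is exactly the feature exploited in the previous paragraph, where the row-shortening halts at length $2$ irrespective of the rank, and it is what allows the conclusion to hold for all $n$ at once.
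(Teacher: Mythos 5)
Note first that the paper offers no proof of this Proposition at all: it is quoted verbatim from Goodearl's paper \cite{Goodearl76}, so your attempt can only be measured against the original argument. Your skeleton does track it in outline: the Morita reduction from $M_n(R)$ to the $R$-module $R^n$ is correct and is the right first move (power-substitution is indeed a categorical condition, and $M_n(R)e_{11}\cong R^n$), and you correctly identify the two engines, namely stable range $2$ for shortening unimodular data and the power stable range $1$ reformulation of power-substitution for commutative rings (the criterion $aR+bR=R\Rightarrow a^m+bx$ a unit, which is Goodearl's elementwise characterization quoted in the paper's introduction).

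However, the core of the proof is asserted rather than carried out, in three places. First, the pivotal sentence ``the obstruction to aligning the rank-$1$ pieces is captured by a unimodular row over $R$'' hides the actual content: in the substitution set-up the datum is a unimodular \emph{pair} in $M_{n}(R)$, i.e.\ a right-invertible $n\times 2n$ matrix $\left[A\;B\right]$ over $R$; its first row is a unimodular row of length $2n$, and stable range $2$ does shorten it, but the shortening is effected by arbitrary elementary column operations on $\left[A\;B\right]$, whereas the substitution problem only tolerates moves of the form $A\mapsto UAV$, $B\mapsto UBW$, $A\mapsto A+BZ$ with $U,V,W$ invertible; adding $A$-columns to $B$-columns is \emph{not} among the admissible moves, so you must verify that the reduction to a pair $(a,b)$ in split position can be performed one-sidedly --- this is where the real work of the proof lies, and your sketch would fail as written if taken literally. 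Second, your induction applies the hypothesis to ``residual rank-$(n-1)$ free summands,'' but you never show that after substituting the rank-$1$ piece the leftover data is again a right-invertible pair of size $n-1$ in standard (block-triangular) position; without that the induction does not close. Third, you flag the common-power bookkeeping as an anticipated obstacle but do not resolve it; the resolution requires the specific observation that if $a^{m}+bx$ is a unit $u$, then for every $k\ge 1$ one has $a^{mk}=(u-bx)^{k}=u^{k}-bx'$, so $a^{mk}+bx'$ is again a unit --- this is exactly what lets the exponent from the rank-$1$ step and the exponent from the inductive step be equalized in a single power $M^{m}$. As it stands, your proposal is a correct road map of the known argument, not a proof: each of points (one) through (three) is a genuine gap that must be filled.
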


    Our goal this paper is to study certain algebraic versions of the notion of stable range 1. In this paper we study  a Bezout ring which has 2 in the stable range and which is a ring square stable range 1.

    A ring $R$ is said to have (right) square stable range 1 (written $ssr(R)=1$) if $aR+bR=R$ for any $a,b\in R$ implies that  $a^2+bx$ is an invertible element of $R$ for some $x\in R$. Considering the problem of factorizing the matrix  $\left(\begin{smallmatrix}a&0\\b&0\end{smallmatrix}\right)$ into a product of two Toeplitz matrices. D.~Khurana, T.Y.~Lam and Zhou Wang were led to ask go units of the form $a^2+bx$ given that $aR+bR=R$.

    Obviously,  a commutative ring which has 1 in the stable range is a ring which has (right) square stable range 1, but not vice versa in general. Examples of rings which have (right) square stable range 1 are rings of continuous real-valued functions on topological spaces and real holomorphy rings in formally real fields~\cite{KhuLamWang11}.

\begin{prop} \cite{KhuLamWang11} \label{prop-2}
    For any ring $R$ with $ssr(R) = 1$, we have that $R$  is right quasi-duo (i.e. $R$ is a ring in which every maximal right ideal  is an ideal).
\end{prop}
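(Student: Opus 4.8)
The plan is to prove that every maximal right ideal $M$ of $R$ is two-sided, arguing by contradiction. Suppose some maximal right ideal $M$ is not a two-sided ideal; equivalently, the left action of $R$ does not preserve $M$, so there are $c \in R$ and $m \in M$ with $cm \notin M$. Everything will reduce to producing from this failure a single well-chosen element on which the square stable range hypothesis can be used.

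The heart of the argument is the following claim: \emph{if $M$ is a maximal right ideal that is not two-sided, then there exists $r \in R \setminus M$ with $r^2 \in M$.} I would deduce this from the Jacobson density theorem applied to the simple right $R$-module $V = R/M$. Writing $D = \operatorname{End}_R(V)$ (a division ring by Schur's lemma) and viewing $V$ as a left $D$-vector space, one checks that $M$ is two-sided precisely when $\dim_D V = 1$: indeed, if $\dim_D V = 1$ then density forces $R/\operatorname{ann}_R(V)$ to be a division ring, whence $M = \operatorname{ann}_R(V)$ is two-sided. So non-two-sidedness gives $\dim_D V \ge 2$. Choosing $v_1 = 1 + M$ and a vector $v_2$ with $\{v_1, v_2\}$ $D$-independent, density produces $r \in R$ with $v_1 r = v_2$ and $v_2 r = 0$; then $r \notin M$ (as $v_1 r \neq 0$) while $v_1 r^2 = (v_1 r) r = v_2 r = 0$, i.e. $r^2 \in M$.

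Granting the claim, the square stable range hypothesis finishes the proof quickly. Since $r \notin M$ and $M$ is maximal, $rR + M = R$, so $rs + b = 1$ for some $s \in R$ and $b \in M$; in particular $rR + bR = R$. By $ssr(R) = 1$ there is $x \in R$ with $r^2 + bx$ a unit. But $r^2 \in M$ and $bx \in M$ (as $b \in M$ and $M$ is a right ideal), so $r^2 + bx \in M$ --- a unit lying in the proper right ideal $M$, which is absurd. Hence $M$ must be two-sided, and $R$ is right quasi-duo.

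The step I expect to be the main obstacle is the claim producing $r \notin M$ with $r^2 \in M$: this is exactly the point where failure of two-sidedness must be converted into an algebraic object that the square stable range condition can act on, and it is where the square in $ssr$ is genuinely used (for a two-sided $M$ no such $r$ exists, since $R/M$ would be a division ring). If one prefers to avoid the density theorem, the same element $r$ can in principle be built by hand from a relation $cms + m_0 = 1$ coming from $cm \notin M$, but keeping the required product inside $M$ is delicate because $M$ is only a right ideal and left multiplication is precisely what $M$ fails to absorb, so the density-theoretic route seems cleanest.
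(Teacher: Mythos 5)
Your proof is correct: the density-theorem claim producing $r \notin M$ with $r^2 \in M$ is sound (Schur's lemma makes $D=\operatorname{End}_R(R/M)$ a division ring, $\dim_D(R/M)=1$ does force $M=\operatorname{ann}_R(R/M)$ to be two-sided, and for $\dim_D(R/M)\ge 2$ density gives the required $r$), after which the square stable range hypothesis places the unit $r^2+bx$ inside the proper right ideal $M$, a contradiction. Note that the paper offers no proof of Proposition~\ref{prop-2} at all --- it is quoted from \cite{KhuLamWang11} --- and your argument is essentially the one used in that reference, so there is nothing here that diverges from the source.
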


    We say that matrices $A$ and $B$ over a ring $R$ are equivalent if there exist invertible matrices $P$ and $Q$ of appropriate sizes such that $B=PAQ$.   If for a matrix $A$ there exists a diagonal matrix $D=\mathrm{diag}(\varepsilon_1,\varepsilon_2,\dots,\varepsilon_r,0,\dots,0)$ such that $A$ and $D$ are equivalent  and $R\varepsilon_{i+1}R\subseteq \varepsilon_iR\cap R\varepsilon_i$ for every $i$ then we say that the matrix $A$ has {a canonical diagonal reduction. A ring $R$ is called an elementary divisor ring if every matrix over $R$ has a canonical diagonal reduction. If every $(1\times2)$-matrix ($(2\times 1)$-matrix) over a ring $R$ has a canonical diagonal reduction then $R$ is called a right (left) Hermitian ring. A ring which is both  right and left Hermitian  is called an Hermitian ring. Obviously, a commutative right (left) Hermitian ring is an Hermitian ring. We note that a right Hermitian ring is a ring in which every finitely generated right ideal is principal.

\begin{thm} \cite{ZabKom90} \label{thm-3}
    Let $R$ be a right quasi-duo elementary divisor ring. Then for any $a\in R$ there exists an element $b\in R$ such that $RaR=bR=Rb$. If in addition all zero-divisors of $R$ lie in the  Jacobson radical, then $R$ is a duo ring.
\end{thm}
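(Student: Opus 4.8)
The plan is to use the two hypotheses for different purposes: the elementary divisor condition to manufacture principal generators together with its divisibility chain $R\varepsilon_{i+1}R\subseteq\varepsilon_iR\cap R\varepsilon_i$, and the right quasi-duo condition to turn a one-sided statement into a two-sided one. First I would assemble the tools. Since an elementary divisor ring is Hermitian, every finitely generated one-sided ideal is principal, and any quotient of $R$ by a two-sided ideal is again a right quasi-duo elementary divisor ring; moreover a right quasi-duo ring is Dedekind-finite (a one-sided inverse is automatically two-sided), and $\ovR=R/J(R)$ is a subdirect product of division rings, in particular reduced with $J(\ovR)=0$. These facts will be invoked freely.

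For the first assertion I would aim to show that the principal two-sided ideal $RaR$ is principal as a right ideal, generated by an element that simultaneously generates it as a left ideal. The mechanism is the canonical diagonal reduction. Given $a$ and a left translate $ra$, reduce the matrix $\left(\begin{smallmatrix}a&0\\0&ra\end{smallmatrix}\right)$ to $\mathrm{diag}(\varepsilon_1,\varepsilon_2)$ with $R\varepsilon_2R\subseteq\varepsilon_1R\cap R\varepsilon_1$; the top divisor $\varepsilon_1$ behaves like a two-sided greatest common divisor of $a$ and $ra$, while the containment $R\varepsilon_2R\subseteq\varepsilon_1R\cap R\varepsilon_1$ is exactly what forces a single generator to serve both sides. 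Organizing this over the family $\{ra:r\in R\}$, which generates $RaR$ as a right module, and passing to $\ovR$ to absorb the radical (where reducedness keeps the generators under control), I expect to extract one element $b$ with $RaR=bR$ and, by the symmetric half of the divisibility relation, $RaR=Rb$.

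For the second assertion, assume in addition that every zero-divisor of $R$ lies in $J(R)$. Fix $b$ as above with $RaR=bR=Rb$ and write $a=bv=ub$ using $a\in bR$ and $a\in Rb$; it suffices to prove that the cofactors $v$ and $u$ are units, for then $aR=bvR=bR=RaR=Rb=Ra$, which is precisely the duo property. To obtain invertibility I would combine two observations: first, any annihilator relation produced by comparing these factorizations involves a zero-divisor, which by hypothesis lies in $J(R)$ and may be discarded modulo the radical; second, $R$ is Dedekind-finite, so a one-sided inverse is a genuine inverse. Working in the reduced ring $\ovR=R/J(R)$, where $\ova$ and $\ovb$ generate the same two-sided ideal by the first part, these observations make $\ova$ and $\ovb$ associates, so $v$ and $u$ are invertible modulo $J(R)$ and hence units in $R$.

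The main obstacle is the first assertion, and specifically the step that turns the a priori infinite sum $RaR=\sum_{r\in R}raR$ into a single principal two-sided ideal. The Hermitian (Bezout) property alone only principalizes finite subsums, so the argument must use the full strength of the canonical diagonal reduction—its divisibility chain, not merely its diagonalization—and combine it with the quasi-duo structure of $\ovR$ to bound the generators. Producing a uniform generator $b$ that serves both $bR$ and $Rb$, rather than two different one-sided generators, is where the real work lies; once $RaR=bR=Rb$ is in hand, the passage to the duo conclusion under the zero-divisor hypothesis is comparatively routine.
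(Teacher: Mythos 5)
The paper does not prove Theorem~\ref{thm-3} at all: it is imported verbatim from \cite{ZabKom90}, so there is no in-paper argument to compare against, and your proposal must stand on its own. It does not. The crux of the first assertion --- producing a \emph{single} element $b$ with $RaR=bR=Rb$ --- is never established. Canonical diagonal reduction of $\left(\begin{smallmatrix}a&0\\0&ra\end{smallmatrix}\right)$ treats one translate $ra$ at a time, yielding some $\varepsilon_1$ with $aR+raR\subseteq\varepsilon_1R$; note also that the reduction's divisibility clause $R\varepsilon_2R\subseteq\varepsilon_1R\cap R\varepsilon_1$ constrains $\varepsilon_2$, not $\varepsilon_1$, so your claim that $\varepsilon_1$ "behaves like a two-sided gcd" is unsupported --- nothing in the reduction makes $\varepsilon_1R$ a left ideal. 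More seriously, you give no mechanism (no chain condition, no stabilization argument, no quasi-duo lemma bounding the translates) for aggregating over the infinite family $\set{ra:r\in R}$ that generates $RaR$; your own wording, "I expect to extract one element $b$," concedes exactly this. The proposed detour through $\overline{R}=R/J(R)$ does not repair it: even if one found $\overline{b}$ with $\overline{RaR}=\overline{b}\,\overline{R}=\overline{R}\,\overline{b}$, lifting a principal generator back to $R$ would require Nakayama-type leverage, and $RaR$ is not known to be finitely generated, so "passing to $\overline{R}$ to absorb the radical" is unjustified.

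The second assertion is closer to workable, but only as an outline you did not actually carry out. A correct version runs: write $a=bv=ub$; since $bR=Rb$, one gets $bR=RaR=RbvR=b(RvR)$ and $Rb=RaR=(RuR)b$; if $b$ is not a zero-divisor, cancellation gives $RvR=RuR=R$, right quasi-duo-ness converts $RvR=R$ into $vR=R$ (otherwise $vR$ sits inside a maximal right ideal, which is two-sided, contradicting $RvR=R$), and Dedekind-finiteness of right quasi-duo rings upgrades the right inverses to units, whence $aR=bR=Rb=Ra$ and $R$ is duo because every one-sided ideal is a sum of principal ones. Your formulation ("any annihilator relation involves a zero-divisor \dots{} may be discarded modulo the radical") does not articulate this cancellation step, and neither your sketch nor the repaired version disposes of the degenerate case in which $b$ itself is a zero-divisor: then $b\in J(R)$, so $RaR\subseteq J(R)$, cancellation is unavailable, and $aR=Ra$ still has to be proved by a separate argument. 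In sum: the first assertion is unproven at its central step, the second is incomplete at its stated level of detail, and as written the proposal is a plan rather than a proof.
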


    Recall that a right (left) duo ring is a ring in which every right (left) ideal is two-sided. A duo ring is a ring which is both left and right duo ring.

    We have proved the next result.

\begin{thm}\label{thm-4}
    Let $R$ be an elementary divisor ring which has (right) square stable range 1 and which all zero-divisors of $R$ lie in Jacobson radical of $R$, then $R$ is a duo ring.
\end{thm}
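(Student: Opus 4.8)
The plan is to assemble the conclusion directly from the two structural facts already recorded in the excerpt, so the argument is essentially a two-step chaining. The first step is to invoke Proposition~\ref{prop-2}: because $R$ has (right) square stable range~1, that is $ssr(R)=1$, every maximal right ideal of $R$ is in fact two-sided, and hence $R$ is right quasi-duo. This converts the square-stable-range hypothesis, which is an arithmetic condition on coprime pairs $a,b$, into the ideal-theoretic property we actually need downstream.

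The second step is to feed this into Theorem~\ref{thm-3}. By hypothesis $R$ is an elementary divisor ring, and we have just shown it is right quasi-duo, so $R$ is a right quasi-duo elementary divisor ring and the first conclusion of Theorem~\ref{thm-3} applies: for every $a\in R$ there exists $b\in R$ with $RaR=bR=Rb$. More to the point, the remaining hypothesis of the present theorem, that every zero-divisor of $R$ lies in the Jacobson radical, is precisely the additional assumption demanded by the second sentence of Theorem~\ref{thm-3}. Applying that part verbatim yields that $R$ is a duo ring, which is exactly the desired conclusion.

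Since every step is a direct citation of a previously stated result, I do not anticipate a genuine obstacle. The only point requiring care is bookkeeping on the hypotheses: one must confirm that the three assumptions we have on hand, namely elementary divisor, right quasi-duo (derived via Proposition~\ref{prop-2}), and all zero-divisors in $J(R)$, match word-for-word the input required by the strengthened half of Theorem~\ref{thm-3}. Once that matching is verified, the proof is complete in two lines.
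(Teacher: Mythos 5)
Your proposal is correct and follows exactly the paper's own argument: apply Proposition~\ref{prop-2} to get that $R$ is right quasi-duo, then invoke the second part of Theorem~\ref{thm-3} (using the elementary divisor hypothesis and the assumption that all zero-divisors lie in the Jacobson radical) to conclude $R$ is duo. No gaps and no divergence from the paper's two-line proof.
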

\begin{proof}
  By Proposition~\ref{prop-2} we have that $R$ is a right quasi-duo ring. By Theorem~\ref{thm-3} we have that $R$ is a duo ring.
\end{proof}

\begin{prop} \label{prop-5}
    Let $R$ be a Hermitian duo ring. For every $a,b,c\in R$ such that $aR+bR+cR=R$ the following conditions are equivalent:
    \begin{itemize}
      \item[1)] there exist elements $p,q\in R$ such that $paR+(pb+qc)R=R$;
      \item[2)] there exist elements $\lambda, u,v\in R$ such that $b+\lambda c=vu$, where $uR+aR=R$ and $vR+cR=R$.
    \end{itemize}
\end{prop}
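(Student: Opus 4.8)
The plan is to prove the two implications separately, exploiting the fact that $R$ is a Hermitian duo ring, so that finitely generated one-sided ideals are principal and two-sided. The key structural observation is that in a duo ring the coprimality condition $uR+aR=R$ is symmetric and behaves well under multiplication, and that every element generating a principal ideal can be moved freely between left and right factorizations.

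For the implication from (2) to (1), I would start from the factorization $b+\lambda c = vu$ with $uR+aR=R$ and $vR+cR=R$. The natural candidate is to set $p=u$ (or a unit times $u$) and then examine the ideal $paR+(pb+qc)R = uaR+(ub+qc)R$ for a suitable choice of $q$. The idea is that multiplying $b$ on the left by $u$ should let me absorb the factorization $b+\lambda c=vu$: writing $ub$ in terms of $vu$ and $u\lambda c$, I expect to choose $q$ so that the $c$-contributions combine with $uR+aR=R$ and $vR+cR=R$ to generate everything. Concretely, using duo-commutativity I would rewrite $u(b+\lambda c)=uvu$ and try to show that $uaR+(ub+qc)R$ contains both a unit-multiple relation coming from $uR+aR=R$ and the coprimality $vR+cR=R$. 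The main arithmetic step is to check that the chosen $p,q$ actually yield the full ring, which reduces to verifying a coprimality identity among $u$, $a$, $c$, $v$ that follows from the two given coprimalities once one passes to the principal generators.

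For the converse, from (1) to (2), I have $paR+(pb+qc)R=R$ and want to produce the factorization $b+\lambda c=vu$ with the stated coprimalities. Here the Hermitian hypothesis is essential: since $R$ is Hermitian, the $(1\times 2)$ row $(pa,\;pb+qc)$ admits a canonical diagonal reduction, and because this row generates $R$ the reduction yields a unit. I would use this to extract, via the invertible transforming matrices, elements playing the roles of $u$ and $v$. The strategy is to let $u$ be the generator of $paR+(pb+qc)R$ restricted appropriately and to read off $v$ as a complementary factor; duo-ness then guarantees that the resulting one-sided coprimalities $uR+aR=R$ and $vR+cR=R$ are genuine. The element $\lambda$ should emerge as the coefficient witnessing that $b+\lambda c$ lands in the principal ideal generated by $vu$.

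I expect the main obstacle to be the converse direction, specifically producing the element $\lambda$ and verifying simultaneously \emph{both} coprimality conditions $uR+aR=R$ and $vR+cR=R$ rather than just one of them. Getting a factorization $b+\lambda c=vu$ is relatively cheap once one has a principal generator, but arranging that the two factors are coprime to $a$ and to $c$ respectively — in the correct orientation — is where the duo hypothesis and the Hermitian reduction must be combined carefully. I anticipate that the proof will repeatedly invoke Theorem~\ref{thm-3} or the elementary-divisor/Hermitian machinery to replace ideals by principal generators, and that the bookkeeping of left-versus-right factors, trivialized by the duo assumption, is what makes the equivalence go through.
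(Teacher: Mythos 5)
Your proposal inverts the roles of the hypotheses, and in the direction 2)$\Rightarrow$1) the candidate you pick provably fails. Setting $p=u$ cannot work: with that choice $paR+(pb+qc)R\subseteq uR+cR$ trivially (since $ua,ub\in uR$ and $qcR\subseteq cR$ in a duo ring), and nothing in condition 2) forces $uR+cR=R$ --- only $uR+aR=R$ and $vR+cR=R$ are given. Concretely, in $R=\mathbb{Z}$ take $a=5$, $c=2$, $u=2$, $v=1$, $\lambda=1$, $b=vu-\lambda c=0$: condition 2) holds, yet $uaR+(ub+qc)R\subseteq 2\mathbb{Z}$ for every $q$, while condition 1) does hold with $p=q=1$. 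The paper's choice of $p$ is different: from $vR+cR=R$ and duo-ness one gets $pv+jc=1$, i.e.\ $p$ inverts $v$ modulo $cR$; then $pb=p(vu-\lambda c)\equiv u \pmod{cR}$, so $u=pb+qc$ for a suitable $q$, whence $(pb+qc)R+aR=uR+aR=R$. The step you omit entirely --- and the \emph{only} place the Hermitian hypothesis enters the whole proposition --- is that this $p$ need not yet be coprime to the resulting row: one writes $pR+qR=dR$ with $p=dp_1$, $q=dq_1$, $p_1R+q_1R=R$, and then verifies $p_1aR+(p_1b+q_1c)R=R$.

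In 1)$\Rightarrow$2) your plan is to diagonally reduce the row $(pa,\,pb+qc)$, but that row is unimodular by hypothesis, so its canonical reduction is $(1,0)$: the ``principal generator'' you propose to restrict is all of $R$, and no $u$, $v$, $\lambda$ can be read off from it. Contrary to your expectation that the Hermitian machinery is essential here, this direction needs no Hermitian input at all; it is elementary duo arithmetic. From $paR+(pb+qc)R=R$ one gets $pR+qcR=R$, hence $pR+cR=R$ (duo), hence $vp+jc=1$ for some $v,j\in R$. Now take $u=pb+qc$, which satisfies $uR+aR=R$ automatically because $paR\subseteq aR$, and compute $v(pb+qc)=vpb+vqc\equiv b\pmod{cR}$, using that $jcb$ and $vqc$ lie in $cR$ since $cR$ is two-sided; this is exactly the factorization $b+c\lambda=vu$ with $vR+cR=R$ witnessed by $vp+jc=1$. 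Your sketch leaves $v$ and $\lambda$ unconstructed (they ``should emerge''), but producing them is the entire content of this direction, and the device that produces them is precisely the inverse of $p$ modulo $c$, not any diagonal reduction.
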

 \begin{proof}
   1)$\Rightarrow$2) Since $paR+(pb+qc)R=R$ we have $pR+qcR=R$ and since $R$ is a duo ring we have $pR+cR=R$. Than $Rp+Rc=R$, i.e. $vp+jc=1$ for some elements $v,j\in R$. Then $vpb+jcb=b$ and $b-vpb=jcb=cj'b=ct$ where $t=j'b$ and $jc=cj'$. Element $j'$ exist, since $R$ is a duo ring.
      Then $v(pb+qc)=vpb+vqc=b+ct+vqc=b+ct+ck$, where $vqc=ck$ for some element $k\in R$.
      That is, we have $v(pb+qc)-b=c\lambda$ for some element $\lambda \in R$. We have $b+c\lambda =v(pb+qc)$. Let $u=pb+qc$.  We have $b+c\lambda =vu$, where $vR+cR=R$, since $vp+cj'=1$ and $uR+aR=R$, since $paR+(pb+qc)R=R$.

      2)$\Rightarrow$1) Since $vR+cR=R$ then $Rv+Rc=R$. Let $pv+jc=1$ for some elements $p,j\in R$. Then  $pR+cR=R$. Since $b+\lambda c=vu$, we have $pb=p(vu-\lambda c)=(pv)u-p\lambda c=(1-jc)u+p\lambda c=u-ju'c+p\lambda c=u+qc$ for some element $q=p\lambda -ju'$, where $cu=u'c$  for some element $u'\in R$. Since $u=pb+qc$, therefore $(pb+qc)R+aR=R$. Since $R$ is an Hermitian duo ring then we have $pR+qR=dR$ where $p=dp_1$, $q=dq_1$ and $p_1R+q_1R=R$. Then $p_1R+(p_1b+q_1c)R=R$ since $pR\subset p_1R$ and $pR+cR=R$, $p_1R+q_1R=R$, i.e. we have $p_1R+(p_1b+q_1c)R=R$. Hence, $aR+(p_1b+q_1c)R$ we have $p_1aR+(p_1b+q_1c)R=R$.
  \end{proof}

 \begin{rem}\label{rem-1}
 In  Proposition~\ref{prop-5} we can choose the elements $u$ and $v$ such that $uR+vR=R$.
 \end{rem}

 \begin{prop} \label{prop-6}
    Let $R$ be an Hermitian duo ring. Then the  following conditions are equivalent:
    \begin{itemize}
      \item[1)] $R$ is an elementary divisor duo ring;
      \item[2)]  for every $x,y,z,t\in R$ such that $xR+yR=R$ and  $zR+tR=R$ there exists an element $\lambda\in R$ such that $x+\lambda y=vu$, where $vR+zR=R$ and $uR+tR=R$.
    \end{itemize}
\end{prop}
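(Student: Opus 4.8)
The plan is to recast both conditions in terms of the Kaplansky-type criterion that Proposition~\ref{prop-5} is built to handle. For a Hermitian ring it is classical (Kaplansky's criterion, in the form valid for duo rings) that $R$ is an elementary divisor ring if and only if for every triple $a,b,c$ with $aR+bR+cR=R$ there exist $p,q$ with $paR+(pb+qc)R=R$; by Proposition~\ref{prop-5} this holds for a given triple exactly when $b+\lambda c=vu$ for some $\lambda$ with $uR+aR=R$ and $vR+cR=R$. Writing $(K)$ for the requirement that such a factorization exist for \emph{every} unimodular triple $(a,b,c)$, condition~1) is thus equivalent to $(K)$, and it suffices to prove $(K)\Leftrightarrow 2)$ over a Hermitian duo ring.

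To prove $(K)\Rightarrow 2)$, I would absorb $z$ into the coefficient. Given $x,y,z,t$ with $xR+yR=R$ and $zR+tR=R$, pick $p,q$ with $zp+tq=1$; then $y=yzp+ytq\in yzR+tR$, so $xR+yzR+tR\supseteq xR+yR=R$ and the triple $(t,x,yz)$ is unimodular. Applying $(K)$ with $(a,b,c)=(t,x,yz)$ gives $\mu,u,v$ with $x+\mu(yz)=vu$, $uR+tR=R$ and $vR+yzR=R$. From $yzR\subseteq zR$ we obtain $vR+zR=R$; and since $Ry$ is a two-sided ideal, $(\mu y)z\in Ry$, i.e.\ $\mu(yz)=\lambda y$ for some $\lambda\in R$. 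Hence $x+\lambda y=vu$ with $vR+zR=R$ and $uR+tR=R$, which is exactly~2).

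For $2)\Rightarrow(K)$, take a unimodular triple $a,b,c$. Since $R$ is Hermitian, $bR+cR=dR$ is principal, say $b=d\beta$, $c=d\gamma$ with $\beta R+\gamma R=R$, and $aR+bR+cR=R$ forces $aR+dR=R$. Apply~2) to the coprime pairs $(\beta,\gamma)$ and $(d,a)$: there is $\lambda$ with $\beta+\lambda\gamma=v_0u_0$, $v_0R+dR=R$ and $u_0R+aR=R$. As $v_0$ divides $\beta+\lambda\gamma$ and $(\beta+\lambda\gamma)R+\gamma R=\beta R+\gamma R=R$, also $v_0R+\gamma R=R$, whence $v_0R+cR=R$. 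Multiplying by $d$ yields $b+\lambda'c=d v_0 u_0$ for a suitable $\lambda'$ (using that $d$ is normal); re-splitting this product as $v\cdot u$ with $v=v_0$ (coprime to $c$) and $u$ carrying the factors $d$ and $u_0$ (coprime to $a$, since both $d$ and $u_0$ are), we obtain the factorization required by $(K)$ for $(a,b,c)$. As the triple was arbitrary, $(K)$ holds and $R$ is an elementary divisor ring.

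The step I expect to be the main obstacle is the bookkeeping forced by non-commutativity. In both implications I must slide the normal factors $z,d,\lambda$ past $y,\gamma,v_0$ and re-split products such as $dv_0u_0$ into the prescribed pieces. These manipulations are legitimate because in a duo ring every principal ideal is two-sided (indeed $dR=Rd$), so each factor may be replaced by an associate generating the same principal ideal; since the comaximality relations $vR+cR=R$ and $uR+aR=R$ depend only on the ideals involved, they are preserved. This is precisely the element-shuffling carried out in the proof of Proposition~\ref{prop-5}, and Remark~\ref{rem-1} may be invoked if one additionally wants $uR+vR=R$. Verifying that every such rewriting keeps all the comaximalities intact is the technical heart of the argument.
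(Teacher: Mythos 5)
Your overall architecture is the paper's: you identify condition 1) with the Kaplansky-type criterion $(K)$ via Proposition~\ref{prop-5} and the criterion from \cite{Zabavsk}, and your direction $(K)\Rightarrow 2)$ is correct and essentially the paper's argument. The paper applies Proposition~\ref{prop-5} to the triple $(z,x,yt)$, while you use the mirror triple $(t,x,yz)$; your version, using $yzR\subseteq zR$ and $yz\in Ry$, lands exactly on the comaximalities $vR+zR=R$, $uR+tR=R$ demanded by 2), which is if anything cleaner than the paper's own write-up (where the roles of $u$ and $v$ come out swapped).

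The genuine gap is in $2)\Rightarrow (K)$, precisely at the ``re-splitting'' step you yourself flag as the technical heart. You factor the gcd on the left, $b=d\beta$, $c=d\gamma$, so after multiplying $\beta+\lambda\gamma=v_0u_0$ by $d$ you obtain $b+\lambda' c=d\,v_0u_0$ with the extra factor $d$ sitting to the \emph{left} of $v_0$. To re-split this as $v_0\cdot u$ you must commute $d$ past $v_0$: since $Rv_0=v_0R$ there is $\delta$ with $dv_0=v_0\delta$, and then $u=\delta u_0$. But your justification --- that each factor may be replaced by an associate generating the same principal ideal --- fails here: the relation $dv_0=v_0\delta$ gives only $dv_0R=v_0\delta R$, not $\delta R=dR$, and $\delta$ need not be an associate of $d$. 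Hence $\delta R+aR=R$ does not follow from $dR+aR=R$. Indeed, if $M$ is a maximal right ideal containing $\delta R+aR$ (such $M$ is two-sided and completely prime, $R/M$ being a division ring since $R$ is duo), then $\delta\in M$ only forces $v_0\in M$, which contradicts none of the facts you have established ($v_0R+dR=R$, $v_0R+cR=R$, $u_0R+aR=R$ all remain consistent with $v_0\in M$). The fix is to take the gcd on the other side, exactly as the paper does: use $Rb+Rc=Rd$ to write $b=\beta d$, $c=\gamma d$ (legitimate since $dR=Rd$), apply 2) to the pairs $(\beta,\gamma)$ and $(d,a)$, and multiply the resulting factorization on the \emph{right} by $d$. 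Then $b+\mu c=v_0(u_0d)$ with no commutation past $v_0$ needed, and $u_0dR+aR=R$ holds because a maximal right ideal containing $u_0d$ and $a$ would, being completely prime, contain $u_0$ or $d$, contradicting $u_0R+aR=R$ or $dR+aR=R$. With that one change of side your argument coincides with the paper's proof.
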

 \begin{proof}
   1)$\Rightarrow$2) Let $R$ be an elementary divisor ring. By \cite{Zabavsk} for any $a$, $b$, $c$ such that $aR+bR+cR=R$ there exist elements $p,q\in R$ such that $paR+(pb+qc)R=R$.

   Since $xR+yR=R$, $zR+tR=R$ and the fact that $R$ is a Hermitian duo ring  we have $zR+xR+ytR=R$. By Proposition~\ref{prop-5} we have $x+\lambda yt=uv$ where $uR+zR=R$, $vR+ytR=R$. Since $x+(\lambda t)y=x+\mu y=uv$ where $\mu =\lambda t$, we have $uR+zR=R$, $vR+yR=R$.

      2)$\Rightarrow$1) Let $aR+bR+cR=R$ and $Rb+Rc=Rd$ and $b=b_1d$, $c=c_1d$, where $Rb_1=Rc_1=R$. Since $R$ is a duo ring then $b_1R+c_1R=R$. So now $dR=Rd$ and $aR+bR+cR=R$, $Rb+Rc=Rd$ we have $aR+dR=R$, i.e. $dd_1+ax=1$ for some elements $d_1,x\in R$. Then $1-dd_1\in aR$.

      Since $b_1R+c_1R=R$, by Conditions 2 of  Proposition~\ref{prop-5}  there exists an element $\lambda_1\in R$ such that $b_1+c_1\lambda=vu_1$ where $u_1R+(1-dd_1)R=R$ and $vR+dd_1R=R$. Since $(1-dd_1)\in aR$ and $u_1R+(1-dd_1)R=R$. We have $uR+aR=R$. Let $u=u_1d$. Since $u_1R+aR=R$ and $dR+aR=R$ we have $uR+aR=R$. Since $b_1+c_1\lambda=vu_1$, we have $b+c\mu+vu$,  where $\lambda d=d\mu$.

      Recall that $vR+dd_1R=R$ then $vR+dR=R$. Since $vR+cR=vR+c_1dR=vR+c_1R$. So  $b_1+c_1\lambda=vu_1$ and $b_1R+c_1R=R$ then $vR+c_1R=R$.

      Therefore, $vR+cR=R$. This means that the Condition 2 of Proposition~\ref{prop-5} is true. By Proposition~\ref{prop-5} we conclude that for every $a,b,c\in R$ with $aR+bR+cR=R$ there exist elements $p,q\in R$ such that $paR+(pb+qc)R=R$, i.e. according to \cite{Zabavsk}, $R$ is an elementary divisor ring.
  \end{proof}

\begin{defin}
    Let $R$ be a duo ring. We say that an element $a\in R\backslash \{0\}$ is neat if for any elements $b,c\in R$ such that $bR+cR=R$ there exist elements $r,s\in R$ such that $a=rs$, where $rR+bR=R$, $sR+cR=R$, $rR+sR=R$.
\end{defin}

\begin{defin}
We say that a duo ring $R$ has neat range 1 if for every $a,b\in R$ such that $aR+bR=R$ there exists an element $t\in R$ such that $a+bt$ is a neat element.
\end{defin}

According to Propositions \ref{prop-5}, \ref{prop-6} and Remark \ref{rem-1} we have the following  result.

\begin{thm}\label{thm-7}
    A Hermitian duo ring $R$ is an elementary divisor ring if and only if $R$ has neat range 1.
\end{thm}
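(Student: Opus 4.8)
The plan is to prove the two implications separately, after using Proposition~\ref{prop-6} to trade ``elementary divisor ring'' for its combinatorial characterization: for a Hermitian duo ring, $R$ is an elementary divisor ring if and only if for every coprime pair $x,y$ (i.e.\ $xR+yR=R$) and every coprime pair $z,w$ there is a $\lambda\in R$ with $x+\lambda y=vu$, where $vR+zR=R$, $uR+wR=R$, and, by Remark~\ref{rem-1}, $uR+vR=R$.

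For the direction ``neat range 1 $\Rightarrow$ elementary divisor ring'' I would simply verify this characterization. Given coprime $x,y$ and coprime $z,w$, neat range 1 yields $t$ with $x+yt$ neat; since $R$ is duo we have $yt\in yR=Ry$, so $yt=\lambda y$ and $n:=x+\lambda y$ is a neat element. Applying the definition of neatness to the coprime pair $z,w$ produces a factorization $n=vu$ with $vR+zR=R$, $uR+wR=R$ and $vR+uR=R$, which is exactly condition (2) of Proposition~\ref{prop-6}; hence $R$ is an elementary divisor ring. The only degenerate case, $x+yt=0$, forces $y$ to be a unit, and then a unit representative of the coset $x+yR$ can be chosen, which is trivially neat.

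For the converse I start with coprime $a,b$ and must exhibit a \emph{single} $t$ for which $a+bt$ is neat, that is, a single representative of the coset $a+bR$ that admits the comaximal factorization for every coprime pair $c,d$. Proposition~\ref{prop-6}(2), applied with $x=a$, $y=b$ and $z=c$, $w=d$, already supplies for each such pair a representative $a+\lambda_{c,d}b=vu$ of exactly the required shape. The main obstacle is the quantifier order: Proposition~\ref{prop-6} returns a shift $\lambda_{c,d}$ that depends on the pair $(c,d)$, whereas neatness of a fixed element demands one shift $t$ that works uniformly over all coprime pairs.

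To bridge this gap I would argue that the coset $a+bR$ contains a distinguished neat representative. Concretely, I would apply the elementary divisor property once to a triple built from $a$ and $b$ alone (via Proposition~\ref{prop-5}) to fix $n=a+bt$ together with a factorization $n=vu$ into comaximal factors, and then show that for an arbitrary coprime pair $c,d$ the factors $v$ and $u$ can be re-split through Proposition~\ref{prop-5} without disturbing the product $n$. I expect the hardest step to be precisely this re-splitting at fixed product: it is here that the duo hypothesis and the comaximality clause $uR+vR=R$ coming from Remark~\ref{rem-1} must be used essentially, and a careful induction on the number of comaximal factors of $n$ --- equivalently, an analysis of the semilocal behaviour of $R/nR$ --- appears unavoidable.
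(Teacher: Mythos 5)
Your first implication (neat range 1 $\Rightarrow$ elementary divisor ring) is correct and is exactly the paper's intended route: using the duo property to rewrite $a+bt$ as $x+\lambda y$, a neat representative of the coset satisfies condition 2) of Proposition~\ref{prop-6} for every coprime pair $(z,t)$ (indeed it gives more, since neatness also yields $rR+sR=R$, the extra clause of Remark~\ref{rem-1}), whence $R$ is an elementary divisor ring. Your worry about the degenerate case $x+yt=0$ is vacuous: neat elements are nonzero by definition, so the $t$ supplied by neat range 1 never produces $0$.

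The converse is where your proposal stops being a proof. You correctly isolate the real difficulty --- Proposition~\ref{prop-6} produces a shift $\lambda_{z,t}$ depending on the coprime pair, while neatness of a single $a+bt$ demands one shift serving all pairs simultaneously --- but your plan for bridging it is left entirely unexecuted. Fixing one representative $n=a+bt$ with one comaximal factorization and then ``re-splitting'' that factorization for an arbitrary pair $(c,d)$ at fixed product is not a formal manipulation: an element admitting a single comaximal factorization $n=vu$ adapted to one pair need not be neat, so the re-splitting step is the whole content of the theorem, and you say yourself that you do not know how to do it. Moreover, the auxiliary devices you invoke are not available in a general Hermitian duo ring: $n$ need not admit any finite comaximal factorization and $R/nR$ need not be semilocal, so your proposed ``induction on the number of comaximal factors'' has no well-founded measure. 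For comparison, the paper offers no argument here either: Theorem~\ref{thm-7} is asserted to follow ``according to Propositions~\ref{prop-5}, \ref{prop-6} and Remark~\ref{rem-1}'', i.e.\ the authors pass from the pair-by-pair statement of Proposition~\ref{prop-6} (strengthened by $uR+vR=R$ via Remark~\ref{rem-1}) to the uniform statement of neat range 1 without comment. So you have honestly located a step that the paper compresses to nothing; but a blind proof must still close it, and yours establishes only the implication from neat range 1 to elementary divisor ring, leaving the other implication unproved.
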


The term "neat range 1" substantiates the following theorem.

\begin{thm}\label{thm-8}
    Let $R$ be a  Hermitian duo ring. If $c$ is a neat element of $R$ then $R/cR$ is a clean ring.
\end{thm}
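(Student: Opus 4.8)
The plan is to show directly that every element of the factor ring $\ovR:=R/cR$ is clean, i.e.\ a sum of an idempotent and a unit; since every element of $\ovR$ is the image $\ova$ of some $a\in R$, it suffices to treat one arbitrary $\ova$. First I would exploit the comaximal pair $(a,1-a)$: as $aR+(1-a)R=R$, the neatness of $c$ applied to this pair produces a factorization $c=rs$ with $rR+aR=R$, $sR+(1-a)R=R$ and $rR+sR=R$. These three comaximality conditions are exactly the data I intend to feed into the construction.

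Next I would pass to $\ovR$, which is again a duo ring, being a homomorphic image of the duo ring $R$ by the two-sided ideal $cR=Rc$. There we have $\ovr\,\ovs=\overline{c}=0$ while $\ovr\ovR+\ovs\ovR=\ovR$; fix $u,v\in R$ with $ru+sv=1$ and set $\ove:=\ovr\,\ovu$. I would check that $\ove$ is a central idempotent of $\ovR$: idempotency follows from $\ove(1-\ove)=\ovr\,(\ovu\,\ovs)\,\ovv=0$, where one first moves $\ovu\,\ovs$ into $\ovs\ovR$ using the duo law and then uses $\ovr\,\ovs=0$; centrality is the standard fact that every idempotent of a duo ring is central.

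The central idempotent $\ove$ splits $\ovR$ as an internal product $\ovR=\ove\ovR\times(1-\ove)\ovR$, and the crucial step is to identify the two factors. Using centrality one gets $\ovr=\ove\,\ovr$ (from $\ovr(1-\ove)=\ovr\,\ovs\,\ovv=0$) and $\ovs=(1-\ove)\,\ovs$ (from $\ove\,\ovs=\ovr\,\ovu\,\ovs=0$), whence $\ove\ovR=\overline{rR}$ and $(1-\ove)\ovR=\overline{sR}$. Since $c=rs\in rR\cap sR$ gives $cR\subseteq rR$ and $cR\subseteq sR$, the canonical maps yield $\ove\ovR\cong\ovR/\overline{sR}\cong R/sR$ and $(1-\ove)\ovR\cong\ovR/\overline{rR}\cong R/rR$. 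Thus $\ovR\cong R/sR\times R/rR$, under which $\ova\mapsto(a+sR,\,a+rR)$ and $\ove\mapsto(1,0)$. Reading off $\ova=\ove+(\ova-\ove)$, the element $\ova-\ove$ corresponds to $\big((a-1)+sR,\;a+rR\big)$: the relation $sR+(1-a)R=R$ says $1-a$, hence $a-1$, is invertible modulo $sR$, and $rR+aR=R$ says $a$ is invertible modulo $rR$ (both one-sided inverses are two-sided, since each factor is duo). Hence $\ova-\ove$ is a unit and $\ova$ is clean.

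The step I expect to require the most care is precisely this passage from the one-sided data to the product decomposition: a priori $\overline{sr}$ need not vanish, so the naive Chinese Remainder argument for $\ovR\cong R/rR\times R/sR$ is not automatic. This is exactly what the centrality of $\ove$ in the duo ring $\ovR$ repairs, forcing $\ove\ovR=\overline{rR}$ and $(1-\ove)\ovR=\overline{sR}$ on the nose (and, a posteriori, $\overline{sr}=(1-\ove)\,\ove\,\ovs\,\ovr=0$), after which the two comaximality conditions coming from neatness line up precisely with the two components.
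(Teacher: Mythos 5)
Your proof is correct, and its first half coincides with the paper's own argument: both apply neatness of $c$ to the comaximal pair $(a,1-a)$, obtain $c=rs$ with $rR+aR=R$, $sR+(1-a)R=R$, $rR+sR=R$, and manufacture an idempotent of $\ovR=R/cR$ from an equation $ru+sv=1$ (you take $\ove=\ovr\,\ovu$; the paper takes its complement $\ovs\,\ovv$). The two arguments diverge at the finish. The paper verifies Nicholson's element-wise exchange criterion --- an idempotent $\ove$ with $\ove\in\ova\ovR$ and $\ov{1}-\ove\in(\ov{1}-\ova)\ovR$ --- and then invokes the cited result of Nicholson that $R/cR$ is an exchange ring, together with the duo hypothesis, to conclude cleanness. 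You instead stay entirely self-contained: you prove $\ove$ is central (duo rings are abelian), split $\ovR\cong R/sR\times R/rR$ via the Peirce decomposition after identifying $\ove\ovR=\ov{rR}$ and $(\ov{1}-\ove)\ovR=\ov{sR}$, and read off directly that $\ova-\ove$ is a unit, using that one-sided inverses are two-sided in the duo (hence directly finite) factor rings. What the paper's route buys is brevity given the exchange-ring machinery and the placement of the theorem within that theory; what yours buys is an explicit clean decomposition with extra structural information ($R/cR$ is literally a product of two quotient rings in which $a$, respectively $1-a$, become invertible), with no appeal to the exchange-to-clean transfer --- in effect you have inlined, for this special case, the standard proof that abelian exchange rings are clean. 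One small point worth making explicit: your step $\ove\,\ovs=\ovr\,\ovu\,\ovs=0$ silently reuses the duo-commutation $\ovu\,\ovs\in\ovs\ovR$ that you established for idempotency; this is needed because only $\ovr\,\ovs=0$ is available a priori, while $\ovs\,\ovr=0$ is, as you correctly note, only obtained a posteriori from centrality of $\ove$.
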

\begin{proof}
    Let $c=rs$, where $rR+aR=R$, $sR+(1-a)R=R$ for any element $a\in R$. Let $\bar r=r+cR$, $\bar s=s+aR$. From the equality $rR+sR=R$ we have $ru+sv=1$ for some elements $u,v\in R$. Hence $r^2u+srv=r$ and $rsu+s^2v=s$ we have $\bar r^2\bar u=\bar r$, $\bar s^2\bar v=\bar s$. Let $\bar s\bar v=\bar e$. It is obvious that $\bar e^2=\bar e$ and $\bar 1-\bar e=\bar u\bar r$. Since $rR+aR=R$, we have $rx+ay=1$ for elements $x,y\in R$. Hence $rxsv+aysv=sv$ we have $rsx'v+aysv=sv$ where $xs=sx'$ for some element $x'\in R$. Then $\bar a\bar y\bar e=\bar e$, i.e. $\bar e\in\bar a\overline R$. Similarly  from the equality $sR+(1-a)R=R$, it follows $\bar 1-\bar e\in (\bar 1-\bar a)\overline R$. According to  \cite{Nicholson} $R/cR$ is an exchange ring. Since $R$  is a duo ring, $R/cR$ is a clean ring.
\end{proof}

Taking into account the Theorem~\ref{thm-4} and Theorem~\ref{thm-7} we have the following result.

\begin{thm}\label{thm-9}
    A Hermitian ring $R$ which has (right) square stable range 1 is an elementary divisor ring if and only if $R$ is a duo ring of neat range 1.
\end{thm}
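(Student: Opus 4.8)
The statement is a biconditional that I read as holding under the standing hypotheses that $R$ is Hermitian and $ssr(R)=1$; the plan is to verify the two implications separately, and they are of quite different difficulty.

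The reverse implication is essentially immediate. Suppose $R$ is a duo ring of neat range~$1$. Since $R$ is Hermitian, Theorem~\ref{thm-7} applies verbatim: a Hermitian duo ring is an elementary divisor ring precisely when it has neat range~$1$. Hence $R$ is an elementary divisor ring. Note that this direction does not even use the square-stable-range hypothesis; it rests entirely on Theorem~\ref{thm-7}.

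For the forward implication I would assume that $R$ is an elementary divisor ring and produce the two conclusions ``duo'' and ``neat range~$1$'' in turn. First, Proposition~\ref{prop-2} shows that $ssr(R)=1$ forces $R$ to be right quasi-duo. Feeding this into Theorem~\ref{thm-4} yields that $R$ is a duo ring. Once $R$ is known to be a Hermitian duo elementary divisor ring, the ``only if'' half of Theorem~\ref{thm-7} immediately returns that $R$ has neat range~$1$, which completes the forward direction.

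The step I expect to carry the real weight is the passage from right quasi-duo to two-sided duo, that is, the invocation of Theorem~\ref{thm-4}. That theorem rests on Theorem~\ref{thm-3}, whose ``duo'' conclusion is stated under the extra requirement that every zero-divisor lie in the Jacobson radical. So the honest obstacle is to confirm that this zero-divisor condition is available in the present setting, or else to extract the duo property directly: Theorem~\ref{thm-3} already guarantees, for each $a\in R$, an element $b$ with $RaR=bR=Rb$, and one would combine this with right quasi-duoness and the Hermitian hypothesis to upgrade ``every right ideal is two-sided'' to full duoness. Apart from this point the argument is a clean concatenation: Proposition~\ref{prop-2} together with Theorem~\ref{thm-4} manufactures the duo structure, and Theorem~\ref{thm-7}, used in its two opposite directions for the two implications, converts between ``elementary divisor ring'' and ``neat range~$1$.''
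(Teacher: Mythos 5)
Your outline coincides with the paper's own proof, which is literally the one sentence ``Taking into account the Theorem~\ref{thm-4} and Theorem~\ref{thm-7} we have the following result'': the reverse implication is Theorem~\ref{thm-7} applied to the Hermitian duo ring $R$ (and, as you observe, never uses $ssr(R)=1$), while the forward implication is Proposition~\ref{prop-2} feeding Theorem~\ref{thm-4} to obtain the duo property, followed by the converse direction of Theorem~\ref{thm-7} to obtain neat range~1. So structurally you have reproduced the intended argument exactly.

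The obstacle you single out, however, is a genuine gap --- and it is a gap in the paper itself, not merely in your write-up. Theorem~\ref{thm-4} is proved by citing the second half of Theorem~\ref{thm-3}, whose duo conclusion requires that every zero-divisor of $R$ lie in the Jacobson radical, and Theorem~\ref{thm-4} accordingly carries that hypothesis in its statement. Theorem~\ref{thm-9} does not, and nothing in ``Hermitian $+$ $ssr(R)=1$ $+$ elementary divisor ring'' is shown anywhere in the paper to imply the zero-divisor condition. The fallback you sketch --- using only the first half of Theorem~\ref{thm-3}, namely that $RaR=bR=Rb$ for each $a$, together with right quasi-duoness --- does not obviously close the hole: it controls the two-sided ideals $RaR$, whereas the duo property demands that each one-sided ideal $aR$ itself be two-sided, and no mechanism in the paper bridges that distinction. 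So your forward implication and the paper's stand or fall together at precisely the point you flagged: either Theorem~\ref{thm-9} should be read with the zero-divisor hypothesis of Theorem~\ref{thm-4} built in (it holds vacuously for domains, which is likely the intended setting, given the Open Question about Bezout domains), or the duo step needs an argument not supplied in the paper. Your reverse implication is complete as written, and your identification of exactly where the forward direction is unsupported is the most valuable part of the proposal.
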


    Let $R$ be a commutative Bezout ring. The matrix $A$ of  order 2 over $R$ is said to be a Toeplitz matrix if it is of the form
    $$
    \begin{pmatrix}
    a&b\\c&a
    \end{pmatrix}
    $$
    where $a,b,c\in R$.

    Notice that if $A$ is an invertible Toeplitz matrix, then $A^{-1}$ is also an invertible Toeplitz matrix.

 \begin{defin}
    A commutative Hermitian ring  $R$ is called a Toeplitz ring if for any $a,b\in R$  there exist an invertible Toeplitz matrix $T$ such that $(a,b)T=(d,0)$ for some element $d\in R$.
\end{defin}

\begin{thm}\label{thm-9-2}
    A commutative Hermitian ring  $R$ is a Toeplitz ring if and only if $R$ is a ring of (right) square range 1.
\end{thm}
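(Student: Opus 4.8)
The plan is to treat both implications as explicit manipulations of $2\times2$ Toeplitz matrices, using the fact that a Toeplitz matrix $\left(\begin{smallmatrix} p & q \\ r & p\end{smallmatrix}\right)$ is invertible precisely when its determinant $p^2-qr$ is a unit of $R$. The guiding observation is that $ssr(R)=1$ is exactly the statement that whenever $aR+bR=R$ one can complete the row $(a,b)$ to an invertible Toeplitz matrix $\left(\begin{smallmatrix} a & b \\ r & a\end{smallmatrix}\right)$: indeed $a^2-br$ is then its determinant, so a unit of the form $a^2+bx$ and an invertible Toeplitz completion are the same datum (with $x=-r$). This dictionary is what I would use to pass between the two properties.

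For the implication ``$ssr(R)=1\Rightarrow$ Toeplitz ring'', fix $a,b\in R$. Since $R$ is a commutative Hermitian ring, the $1\times2$ matrix $(a,b)$ admits a canonical diagonal reduction, so there is an invertible matrix $Q=\left(\begin{smallmatrix} x & y \\ z & w\end{smallmatrix}\right)$ with $(a,b)Q=(d,0)$ for some $d$. The second coordinate gives $ay+bw=0$, i.e. the column $(y,w)^{\top}$ annihilates $(a,b)$, and since $\det Q=xw-yz$ is a unit we get $wR+yR=R$ for free. Now I would apply $ssr(R)=1$ to the coprime pair $(w,y)$ to produce $r\in R$ with $w^2-yr$ a unit, and assemble the Toeplitz matrix $T=\left(\begin{smallmatrix} w & y \\ r & w\end{smallmatrix}\right)$, which is invertible by the determinant criterion. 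Because its second column is still $(y,w)^{\top}$, we retain $ay+bw=0$, so $(a,b)T=(aw+br,0)$, exhibiting the required Toeplitz reduction.

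For the converse ``Toeplitz ring $\Rightarrow ssr(R)=1$'', fix $a,b$ with $aR+bR=R$. By hypothesis there is an invertible Toeplitz $T=\left(\begin{smallmatrix} p & q \\ r & p\end{smallmatrix}\right)$ with $(a,b)T=(d,0)$, and from $(a,b)=(d,0)T^{-1}$ together with $d=ap+br\in aR+bR$ one reads off $aR+bR=dR$; since $aR+bR=R$, the element $d$ is a unit. Writing $\Delta=p^2-qr$ (a unit) and $u=d\Delta^{-1}$, the identity $(a,b)=(d,0)T^{-1}$ gives $a=up$ and $b=-uq$. Then the direct computation $a^2+b(ur)=u^2p^2-u^2qr=u^2\Delta$ produces a unit, so $x=ur$ witnesses $ssr(R)=1$.

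The routine parts are the determinant bookkeeping and the verifications that $d$ (respectively the completed matrix) is a unit. I expect the one genuinely non-obvious step to lie in the backward direction, namely resisting the temptation to factor $a=da_1$, $b=db_1$ and apply $ssr$ to $(a_1,b_1)$: over a commutative Bezout ring with zero-divisors such a factorization need not be coprime. The clean way around this is to extract the unimodular annihilating column $(y,w)^{\top}$ directly from the Hermitian reduction $Q$ and to apply $ssr$ to $(w,y)$ instead; the Toeplitz shape and the vanishing of the second coordinate then come essentially for free, so no delicate gcd argument is needed.
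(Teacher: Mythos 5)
Your proof is correct, and its core mechanism --- the dictionary between units of the form $a^2+bx$ and determinants of Toeplitz completions $\left(\begin{smallmatrix} a&b\\ r&a\end{smallmatrix}\right)$ --- is exactly the paper's. The backward direction is essentially the paper's argument (there one sets $S=T^{-1}=\left(\begin{smallmatrix} x&t\\ y&x\end{smallmatrix}\right)$, uses that the inverse of an invertible Toeplitz matrix is Toeplitz, and reads off the determinant); in fact yours is slightly more careful, since the paper tacitly normalizes $(a,b)T=(1,0)$ although the definition only supplies $(a,b)T=(d,0)$, whereas your computation $a=up$, $b=-uq$, $a^2+b(ur)=u^2\Delta$ with $u=d\Delta^{-1}$ handles general $d$ after observing $dR=aR+bR=R$. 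The forward direction is where you genuinely diverge: the paper factors $a=da_0$, $b=db_0$ with $a_0R+b_0R=R$, applies $ssr$ to $(a_0,b_0)$, and builds $S=\left(\begin{smallmatrix} a_0&-b_0\\ t&a_0\end{smallmatrix}\right)$ scaled by $u^{-1}$; you instead extract the unimodular annihilating column $(y,w)^{\top}$ from the Hermitian reduction $(a,b)Q=(d,0)$ and apply $ssr$ to the pair $(w,y)$. Both routes use Hermitianness once plus one application of $ssr$ and both produce a reduction $(a,b)T=(d',0)$ with $d'R=aR+bR$; yours has the small advantage of not needing to quote the coprime-factorization lemma from the literature, while the paper's is more explicit about the matrix witnessing the reduction. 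One remark in your closing paragraph is off, however: in a commutative \emph{Hermitian} ring the factorization $a=da_0$, $b=db_0$ with $a_0R+b_0R=R$ \emph{does} exist even in the presence of zero-divisors --- $(a_0,b_0)$ is precisely the first row of $Q^{-1}$ in the reduction $(a,b)Q=(d,0)$, and it is unimodular because $Q^{-1}$ is invertible --- so the ``temptation'' you warn against is in fact the paper's own route and is perfectly valid here; it would only fail in a Bezout ring that is not Hermitian. This misdiagnosis does not affect the correctness of your proof.
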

\begin{proof}
    Let $R$ be a commutative Hermitian ring of (right) square stable range 1 and $aR+bR=R$ for some elements $a,b\in R$. Then $a^2+bt=u$, where $u$ is an  invertible element of $R$.

    Let
    $$
    S=    \begin{pmatrix}
    a&-b\\t&a
    \end{pmatrix}, \quad     K=\begin{pmatrix}
    u^{-1}&0\\0&u^{-1}
    \end{pmatrix}.
    $$
    Then
    $$
    (a,b)S=(u,0),\quad (u,0)K=(1,0),
    $$
    i.e. we have
    $$
    (a,b)SK=(1,0).
    $$
    Since
    $$
    \begin{pmatrix}
    a&-b\\t&a
    \end{pmatrix}\begin{pmatrix}
    u^{-1}&0\\0&u^{-1}
    \end{pmatrix}=\begin{pmatrix}
    au^{-1}&-bu^{-1}\\-tu^{-1}&au^{-1}
    \end{pmatrix}=T
    $$
    we have that $T=SK$ is a Toeplitz matrix. So $(a,b)T=(1,0)$. If $a,b\in R$ and $aR+bR=dR$ then by $a=da_0$, $b=db_0$ and $a_0R+b_0R=R$ \cite{Zabavsk}. Then there exists an element $t\in R$ such that $a_0+b_0t=u$, where $a$ is an invertible element of $R$.

    Let
    $$
    \begin{pmatrix}
    a_0&-b_0\\t&a_0
    \end{pmatrix}\begin{pmatrix}
    u^{-1}&0\\0&u^{-1}
    \end{pmatrix}.
    $$
    Note that $T$ is an invertible Toeplitz matrix. Then $(a,b)T=(d,0)$, i.e. $R$ is a  Toeplitz ring.

    Let $R$ be a  Toeplitz ring and $aR+bR=R$. The exists an invertible Toeplitz  matrix $T$ such that $(a,b)T=(1,0)$. Let $S=T^{-1}=\begin{pmatrix}x&t\\y&x\end{pmatrix}$, where $x,y,t\in R$. So $\det T^{-1}=z^2+ty=u$ is an invertible element of  $R$. Since $(a,b)=(1,0)T^{-1}$, we have $a=x$, $b=t$. By equality $x^2+ty=u$ we have $a^2+by=u$, i.e. $R$ is a ring of  (right) square stable range 1.
\end{proof}

\begin{thm}\label{thm-10}
    Let $R$ be a commutative ring of  square stable range 1. Then for any row $(a,b)$, where $aR+bR=R$, there  exists an invertible Toeplitz matrix
    $$
    T=\begin{pmatrix}a&b\\x&a\end{pmatrix},
    $$
    where $x\in R$.
\end{thm}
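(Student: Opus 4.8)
The plan is to reduce the statement to the defining property of square stable range 1 by way of the determinant criterion for invertibility of $2\times 2$ matrices over a commutative ring. The key observation is that for the candidate matrix
$$
T=\begin{pmatrix}a&b\\x&a\end{pmatrix}
$$
the determinant is $\det T = a^2 - bx$, so the whole problem collapses to producing a single element $x\in R$ for which $a^2 - bx$ is a unit. Since $R$ is commutative, a square matrix over $R$ is invertible exactly when its determinant is a unit, the inverse being supplied by the adjugate divided by the determinant; thus finding such an $x$ is both necessary and sufficient for $T$ to be invertible.

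With this reformulation in hand, I would simply invoke the hypothesis $ssr(R)=1$. From $aR+bR=R$ the definition of (right) square stable range 1 delivers an element $y\in R$ such that $a^2+by$ is invertible in $R$. Setting $x=-y$ then gives $\det T = a^2 - bx = a^2 + by$, a unit, so $T$ is an invertible Toeplitz matrix of the prescribed form, which is precisely the claim. Note that the resulting $T$ is genuinely Toeplitz, since its two diagonal entries both equal $a$.

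There is no serious obstacle here: the argument is essentially a direct translation between the additive normalization built into the definition of square stable range 1 and the determinant of a $2\times 2$ Toeplitz matrix. The only point demanding a moment's attention is the sign bookkeeping, namely matching the "$+$" occurring in the expression $a^2+by$ from the definition against the "$-$" appearing in the determinant $a^2-bx$; this is handled cleanly by the substitution $x=-y$, after which the determinant criterion for invertibility over a commutative ring closes the proof.
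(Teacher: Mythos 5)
Your proof is correct, but it takes a more direct route than the paper. The paper derives Theorem~\ref{thm-10} from Theorem~\ref{thm-9-2}: it first completes the unimodular row via $(a,b)T=(1,0)$ for an invertible Toeplitz $T$, then writes $(a,b)=(1,0)T^{-1}$ and uses the fact (noted earlier in the paper) that the inverse of an invertible $2\times 2$ Toeplitz matrix is again Toeplitz, so that $T^{-1}=\left(\begin{smallmatrix}a&b\\y&a\end{smallmatrix}\right)$ is the required matrix. You instead verify the conclusion in one step: $\det\left(\begin{smallmatrix}a&b\\x&a\end{smallmatrix}\right)=a^2-bx$, the definition of $ssr(R)=1$ supplies $y$ with $a^2+by$ a unit, and the substitution $x=-y$ makes the determinant a unit, which over a commutative ring is exactly invertibility. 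Of course the same determinant computation is what powers the relevant direction of Theorem~\ref{thm-9-2}, so the two arguments coincide at bottom; but your version has a genuine advantage in hygiene: Theorem~\ref{thm-9-2} is stated for commutative \emph{Hermitian} rings, while Theorem~\ref{thm-10} assumes only a commutative ring of square stable range 1, so the paper's proof quietly uses a result under stronger hypotheses than the theorem carries (harmless here, since the Hermitian condition is only needed for non-unimodular rows, but still a mismatch). Your self-contained argument matches the stated hypotheses exactly and needs no appeal to the Toeplitz-ring machinery.
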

\begin{proof}
  By Theorem \ref{thm-9-2} we have $(a,b)=(1,0)T$ for some invertible Toeplitz matrix $T$. Let $
    T=\begin{pmatrix}x&t\\y&x\end{pmatrix}
    $. Then $a=x$, $b=t$ and $
    T=\begin{pmatrix}a&b\\y&a\end{pmatrix}
    $ is an invertible Toeplitz matrix.
\end{proof}

Recall that $GE_n(R)$ denotes a group of $n\times n$ elementary matrices over ring $R$. The following  theorem demonstrated that it is sufficient to consider only the case of matrices of order 2 in Theorem~\ref{thm-9-2}.

\begin{thm}\label{thm-11} \cite{Zabavsk}
 Let $R$  be a commutative elementary divisor ring. Then for any $n\times m$ matrix $A$ ($n>2$, $m>2$) one can find matrices $P\in GE_n(R)$ and $Q\in GE_m(R)$ such that
 $$
 PAQ=\begin{pmatrix}
 e_1&0&\dots&0&0\\
 0&e_2&\dots&0&0\\
 \dots&\dots&\dots&\dots&\dots\\
  0&0&\dots&e_s&0\\
  0&0&\dots&0&A_0
 \end{pmatrix}
 $$
 where $e_i$ is a divisor of $e_{i+1}$, $1\le i\le s-1$, and $A_0$ is a $2\times k$ or $k\times 2$ matrix for some $k\in\mathbb{N}$.
\end{thm}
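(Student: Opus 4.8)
The plan is to argue by induction on $\min(n,m)$, peeling off one invariant factor at each step by elementary operations and recursing on the complementary submatrix, and to stop precisely when one of the two dimensions has been forced down to $2$. The invariant driving the argument is that the ideal $\sum_{i,j}a_{ij}R$ generated by all entries of a matrix is unchanged under multiplication by elementary matrices on either side, since such operations replace entries by $R$-combinations of entries and are reversible. Because $R$ is Bezout this ideal is principal, say equal to $e_1R$, and the goal of one inductive step is to use matrices of $GE_n(R)$ and $GE_m(R)$ to move a generator $e_1$ into position $(1,1)$. Once $a_{11}=e_1$ generates $\sum_{i,j}a_{ij}R$, every other entry lies in $e_1R$, so one clears the rest of the first row and column by subtracting the appropriate multiples of the first row and first column — honest elementary operations — leaving $e_1$ in the corner and an $(n-1)\times(m-1)$ block all of whose entries are divisible by $e_1$. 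Applying the inductive hypothesis to this block yields $e_2,\dots,e_s$ and the residual $A_0$, and the divisibility $e_1\mid e_2$ is automatic because every entry of the block already lies in $e_1R$, so its gcd does too.

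The engine of the induction is the reduction step, valid whenever $n\ge 3$ and $m\ge 3$: by matrices of $GE_n(R)$ and $GE_m(R)$ one can place $\gcd$ of all entries at position $(1,1)$. The naive attempt — reduce the first column to a single nonzero entry by elementary row operations, then the first row by elementary column operations — runs, for a two-element unimodular configuration $\binom{c_1}{c_2}$, into the gap between $GE_2(R)$ and $GL_2(R)$: completing $\binom{c_1}{c_2}$ to $\binom{1}{0}$ generally requires a determinant-one matrix that is not a product of elementary ones. This is exactly where the hypothesis that $R$ is an \emph{elementary divisor} ring, and not merely Bezout, is used. The criterion recalled from \cite{Zabavsk}, that $aR+bR+cR=R$ forces the existence of $p,q$ with $paR+(pb+qc)R=R$, is precisely the device that lets such a $2\times2$ remnant be resolved by elementary operations once a third, coprime coordinate is available to be mixed in, and a third coordinate is available precisely because $n,m\ge 3$. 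I would therefore alternate column- and row-reductions, at each stage using a spare row or column together with this criterion to eliminate the non-elementary remnant, until $\gcd$ of all entries sits in the corner.

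The step I expect to be the main obstacle is exactly this resolution of the terminal $2\times2$ remnant by borrowing a spare row or column, and it is also what dictates the shape of the conclusion. While both dimensions exceed $2$ there is always a third direction to borrow, and the elementary divisor criterion converts the $GE_2$-irreducible remnant into elementary moves; but as soon as the shrinking submatrix reaches size $2\times k$ or $k\times 2$, no spare direction remains, the $GE_2$ versus $GL_2$ gap becomes a genuine obstruction, and the process halts with the irreducible block $A_0$. Thus the induction runs while $\min(n,m)\ge 3$, extracting $e_1\mid e_2\mid\cdots\mid e_s$ as successive gcds, and terminates with $A_0$ of size $2\times k$ or $k\times 2$, which is the asserted normal form. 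Beyond this the only work is bookkeeping: verifying that each shortening and clearing move is realized by a genuine element of $GE_n(R)$ or $GE_m(R)$ and that the divisibility chain is preserved, both of which follow from the invariance of the entry-ideal noted at the outset.
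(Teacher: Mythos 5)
The paper does not actually prove this statement: Theorem~\ref{thm-11} is quoted from the monograph \cite{Zabavsk}, so your attempt can only be measured against the known proof there, which runs through the full $GL$-diagonalization and stability, not through an alternating reduction. And it is precisely your engine --- ``alternate column- and row-reductions \dots until the gcd of all entries sits in the corner'' --- that contains a genuine gap. Over a PID the classical alternation terminates because the successive corner entries $d_1, d_2, \dots$ generate an ascending chain of principal ideals $d_1R\subseteq d_2R\subseteq\cdots$ and the ring is Noetherian; an elementary divisor ring satisfies no chain condition, so this descent can a priori run forever, and nothing in your argument bounds the number of passes. This is not a bookkeeping issue: avoiding exactly this non-terminating descent is the whole point of the Kaplansky-type criterion you cite, which performs a \emph{one-shot} extraction (from $aR+bR+cR=R$ it produces $p,q$ with $paR+(pb+qc)R=R$, hence a single $2\times2$ reduction whose corner already generates the entry ideal of the block), rather than an improvement step to be iterated. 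Two further points are asserted but not established: the operation replacing $\mathrm{row}_1$ by $p\,\mathrm{row}_1+q\,\mathrm{row}_2$ suggested by the criterion is not an elementary transformation, so realizing it inside $GE_n(R)$ via a spare row needs an explicit lemma (it works because $pR+qR=R$ and $n\ge3$, but you must write it down); and reducing a general unimodular column of length $\ge 3$ to a standard basis vector by elementary operations requires knowing that $R$ has stable range $2$ --- true for commutative Hermitian rings, but a fact that must be invoked, since your spare-zero trick only handles the case where a zero coordinate is already available.

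The gap can be closed by using the elementary divisor hypothesis once, globally, instead of re-deriving corner placement by alternation. Since $R$ is an elementary divisor ring there are invertible $P,Q$ with $PAQ=\mathrm{diag}(e_1,\dots)$ canonical; taking $u$ the first row of $P$ and $v$ the first column of $Q$ gives a unimodular row $u$ and unimodular column $v$ with $uAv=e_1$ and $e_1R$ equal to the entry ideal. For $n,m\ge 3$, stable range $2$ yields $E\in GE_n(R)$, $F\in GE_m(R)$ with $uE=(1,0,\dots,0)$ and $Fv=(1,0,\dots,0)^T$, whence the $(1,1)$ entry of $E^{-1}AF^{-1}$ is $e_1$; since $e_1$ divides every entry, your clearing of the first row and column, the recursion, the automatic divisibility $e_1\mid e_2$, and the halt at a $2\times k$ or $k\times 2$ block $A_0$ (where the $GE_2$ versus $GL_2$ obstruction becomes real, as you correctly diagnose) all go through exactly as you describe. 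Equivalently one can argue as in \cite{Zabavsk} via the stability decomposition $GL_n(R)=GL_2(R)\cdot GE_n(R)$ for $n\ge3$ over rings of stable range $2$, applied to $P^{-1}$ and $Q^{-1}$, which deposits the non-elementary part into an embedded $GL_2$ acting on the last two rows and columns and again produces the residual block. Your invariance of the entry ideal and the overall induction scheme are correct; what is missing is the finite, elementary realization of the corner step, which your alternation does not supply.
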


\begin{thm}\label{thm-13}
 Let $R$  be a commutative elementary divisor ring of (right) square stable range 1. Then for any $2\times 2$ matrix $A$  one can find invertible Toeplitz matrices $P$ and $Q$ such that
 $$
 PAQ=\begin{pmatrix}
 e_1&0\\
 0&e_2
 \end{pmatrix},
 $$
 where $e_i$ is a divisor of $e_2$.
\end{thm}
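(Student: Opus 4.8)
The plan is to combine the elementary divisor property of $R$, which guarantees that a Smith-type diagonal form exists, with the Toeplitz machinery of Theorem~\ref{thm-9-2} and Theorem~\ref{thm-10}, which manufactures the individual Toeplitz transformations out of the square-stable-range-$1$ units. Since $R$ is a commutative elementary divisor ring, $A$ is equivalent to a diagonal matrix $\mathrm{diag}(e_1,e_2)$ with $e_1\mid e_2$, and $e_1$ is the greatest common divisor of all four entries of $A$; this $e_1\mid e_2$ is the divisibility demanded in the conclusion. Thus the issue is not the \emph{existence} of a diagonal reduction but the possibility of realizing the two transforming matrices as \emph{single} invertible Toeplitz matrices. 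The basic supply of such matrices is this: by Theorem~\ref{thm-9-2} any row $(a,b)$ admits an invertible Toeplitz $Q$ with $(a,b)Q=(\gcd(a,b),0)$, and, transposing (legitimate since $R$ is commutative and the transpose of a Toeplitz matrix is again Toeplitz), any column admits an invertible Toeplitz matrix clearing it from the left; Theorem~\ref{thm-10} records that these arise by completing a unimodular row $(a_0,b_0)$ to $\bigl(\begin{smallmatrix}a_0&b_0\\x&a_0\end{smallmatrix}\bigr)$ using a unit of the form $a_0^2+b_0x$.

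Concretely, I would first follow the classical diagonalization of a $2\times2$ matrix over a commutative Bezout elementary divisor ring: reduce the first row to $(d_1,0)$ with $d_1=\gcd(a,b)$ by a right Toeplitz factor, reduce the first column to its gcd by a left Toeplitz factor, then bring the $(1,1)$ entry down to the global gcd $e_1$ by a genuine mixing of the two rows and the two columns, and finally annihilate the off-diagonal entry. Each single step here is realizable by a Toeplitz matrix: the row and column collapses come from Theorem~\ref{thm-9-2} and its transpose, while a unit-diagonal matrix $\left(\begin{smallmatrix}1&\lambda\\0&1\end{smallmatrix}\right)$ and its transpose are themselves Toeplitz and supply the elementary row/column operations. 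In the globally coprime case one may even peel off $e_1$ at the outset, since $e_1I$ is a \emph{central} Toeplitz matrix and a reduction of $A/e_1$ multiplies back up to $\mathrm{diag}(e_1,e_1e_2')$.

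The hard part will be that this natural reduction uses several operations on each side, while Toeplitz matrices are \emph{not} closed under multiplication, so the product of the successive factors need not be Toeplitz; concretely, because a Toeplitz matrix has equal diagonal entries, clearing the first column from the left simultaneously reintroduces a $(1,2)$ entry proportional to the surviving diagonal pivot, so one cannot clear the row and the column independently with one factor on each side. I expect this to be the crux. I would resolve it by choosing the off-diagonal entries of $P$ and of $Q$ \emph{simultaneously}, invoking the elementary divisor factorization of Proposition~\ref{prop-6}(2) together with Remark~\ref{rem-1}: the row-gcd datum and the column-gcd datum furnish two coprime pairs, and the proposition produces a shift $\lambda$ and a factorization $x+\lambda y=vu$ with $v,u$ coprime and coprime to the prescribed elements, which is exactly the data needed to pick the single left $P$ and single right $Q$ for which the $(1,2)$ and $(2,1)$ entries of $PAQ$ vanish at once. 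The divisibility $e_1\mid e_2$ then follows because the gcd of all entries is preserved under the invertible Toeplitz transformations, forcing the resulting diagonal entries to be the invariant factors. A secondary technical point to watch is the factoring-out of $e_1$ when $R$ has zero-divisors, where the quotient entries need not be automatically coprime; there I would carry $e_1$ through the computation directly rather than divide by it.
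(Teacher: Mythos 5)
Your diagnosis of the crux is exactly right --- invertible Toeplitz matrices are not closed under multiplication, so a multi-step reduction does not by itself yield a single Toeplitz factor on each side --- but your proposed resolution is where the proposal breaks down. You assert that Proposition~\ref{prop-6}(2) together with Remark~\ref{rem-1} supplies ``exactly the data needed'' to choose one left Toeplitz $P$ and one right Toeplitz $Q$ killing the $(1,2)$ and $(2,1)$ entries simultaneously, but you never exhibit the computation, and it does not follow. Write it out: with $A=\left(\begin{smallmatrix}a&b\\0&c\end{smallmatrix}\right)$, $P=\left(\begin{smallmatrix}p&q\\w&p\end{smallmatrix}\right)$, $Q=\left(\begin{smallmatrix}r&m\\s&r\end{smallmatrix}\right)$, diagonality of $PAQ$ requires $pam+(pb+qc)r=0$ and $war+(wb+pc)s=0$ with $p^2-qw$ and $r^2-ms$ units; the entries $w$ and $m$ are not free parameters --- they are whatever the square-stable-range completion of Theorem~\ref{thm-10} hands you --- and nothing in Proposition~\ref{prop-6}(2) (a factorization $x+\lambda y=vu$ with coprimality side conditions, i.e.\ the neat-range machinery behind Theorem~\ref{thm-7}) controls this pair of bilinear equations. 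So the central step of your plan is an unproven assertion, not a proof.

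For comparison, the paper does not attempt what you attempt. Its proof reduces (via the Toeplitz ring property, Theorem~\ref{thm-9-2}) to triangular $A$ with $aR+bR+cR=R$, invokes Zabavsky's elementary-divisor criterion to obtain $p,q$ with $paR+(pb+qc)R=R$, hence $par+(pb+qc)s=1$; since $(p,q)$ and $(r,s)$ are unimodular pairs, Theorem~\ref{thm-10} completes them to invertible Toeplitz $P$ and $Q$ with $PAQ=\left(\begin{smallmatrix}1&x\\y&z\end{smallmatrix}\right)$, and the off-diagonal entries are then cleared by the unipotent triangular matrices $S=\left(\begin{smallmatrix}1&0\\-y&1\end{smallmatrix}\right)$ and $T=\left(\begin{smallmatrix}1&-x\\0&1\end{smallmatrix}\right)$, each of which is individually Toeplitz. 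The paper thus ends with $SPAQT$ diagonal, i.e.\ a \emph{product of two} Toeplitz matrices on each side; it never verifies that $SP$ and $QT$ are themselves Toeplitz (generically they are not, for precisely the closure reason you identified). So your instinct that the single-factor reading is the hard point is sound --- the paper's own proof only delivers the four-factor version --- but your proposal does not close that gap, and to match what is actually provable here you should aim for the paper's route: one Toeplitz transformation on each side to put $1$ in the $(1,1)$ position, obtained from the criterion $paR+(pb+qc)R=R$ plus the Toeplitz completions of Theorem~\ref{thm-10}, followed by the triangular Toeplitz clean-up. Your secondary remarks (transposes of Toeplitz matrices are Toeplitz, $e_1I$ is central Toeplitz, the zero-divisor caveat in factoring out $e_1$) are all reasonable and in fact flag a point the paper itself glosses over in its opening sentence ``it is enough to consider.''
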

\begin{proof}
  Since $R$ is a  Toeplitz ring it is enough to consider matrices of the form
  $$
  A=\begin{pmatrix}
  a&b\\0&c\end{pmatrix},
  $$
  where $aR+bR+cR=R$. Since $R$ is an elementary divisor ring by \cite{Zabavsk} there exist elements $p,q\in R$ such that $paR+(pb+qc)R=R$, i.e. $par +(pb+qc)s=1$ for some elements $r,s\in R$. Since $pR+qR=R$ and $rR+sR=R$, by Theorem~\ref{thm-10} we have the invertible Toeplitz matrices
  $P=\begin{pmatrix}p&q\\ * & *\end{pmatrix}$, $Q=\begin{pmatrix}r&*\\s&*\end{pmatrix}$ such that
  $$
  PAQ=\begin{pmatrix}1&x\\y&z\end{pmatrix}=A_1.
  $$
  Then
  $$
  \begin{pmatrix}1&0\\-y&1\end{pmatrix}A_1\begin{pmatrix}1&-x\\0&1\end{pmatrix}=\begin{pmatrix}1&0\\0&ac\end{pmatrix},
  $$
  where $S=\begin{pmatrix}1&0\\-y&1\end{pmatrix}$ and $T=\begin{pmatrix}1&-x\\0&1\end{pmatrix}$ are invertible Toeplitz matrices. So
  $$
  SPAQT=\begin{pmatrix}1&0\\0&ac\end{pmatrix}.
  $$
  Theorem is proved.
\end{proof}

\textbf{Open Question}. Is it true that every  commutative Bezout domain of stable range 2 which has (right) square stable  range 1 is an elementary divisor ring?

\vspace{5ex}
\renewcommand\refname{References}

\end{document}